\newtheorem{thm}{Theorem}[section]
\theoremstyle{definition}
\newtheorem{defn}[thm]{Definition}
\theoremstyle{remark}
\numberwithin{equation}{section}
\begin{document}
\title[Deferred Norlund statistical convergence]{Deferred Norlund statistical convergence in probability, mean and distribution for sequences of random variables}%
\author{ Kuldip Raj, Swati Jasrotia}%
\address{School of  Mathematics
Shri Mata Vaishno Devi University, Katra-182320, J \& K (India)}%
\email{kuldipraj68@gmail.com}%
\email{swatijasrotia12@gmail.com}%
\author[Saadati]{Reza Saadati$^{*}$}
\address{Reza Saadati \newline\indent  School of Mathematics, Iran University of Science and Technology, Narmak, Tehran, Iran}
\email{rsaadati@eml.cc, rsaadati@iust.ac.ir}
\thanks{}%
\subjclass[2010]{40A05, 40A30}%
\keywords{Probability convergence, deferred N\"{o}rlund, mean convergence, distribution convergence, statistical convergence}%
\begin{abstract}
We introduce and study deferred N\"{o}rlund statistical convergence in probability, mean of order $r,$ distribution and study the interrelation among them. Based upon the proposed method to illustrate the findings, we presented new Korovkin type theorems for the sequence of random variables via deferred N\"{o}rlund statistically convergence and present compelling examples to demonstrate the effectiveness of the results.
\end{abstract}
\maketitle
\numberwithin{equation}{section}
\newtheorem{theorem}{Theorem}[section]
\newtheorem{lemma}[theorem]{Lemma}
\newtheorem{proposition}[theorem]{Proposition}
\newtheorem{corollary}[theorem]{Corollary}
\newtheorem*{remark}{Remark}
\section{\textbf{Introduction and Preliminaries}}
\noindent  Fast \cite{fast} and also Schoenberg \cite{schoen} studied the concept of  statistical convergence and continued by Rath--Tripathy \cite{rath} and Gadjiev--Orhan \cite{gadjiev and Orhan}.

  Suppose that $(x_{m})$ and $(y_{m})$ are the sequences of non-negative integers fulfilling\\
\begin{equation}\label{!!}
  x_{m}<y_{m},\;\forall\; m\;\in \mathbb{N}\;\;\;\;\;\;\;\;\;\;\;\;\mbox{and}\;\;\;\;\;\;\;\;\;\;\;\;\;\;\lim_{x\rightarrow\infty}y_{m}=\infty.\\
\end{equation}
Further, let $(e_{m})$ and $(g_{m})$ be two sequences of non-negative real numbers such that\\
\begin{equation}\label{**}
  \mathcal{E}_{m}=\displaystyle\sum_{n=x_{m}+1}^{y_{m}}e_{n}\;\;\;\mbox{and}\;\;\;\mathcal{F}_{m}=\displaystyle\sum_{n=x_{m}+1}^{y_{m}}g_{n}.\\
\end{equation}
The convolution of \eqref{**} is defined as\\
$$\mathcal{R}_{m} = \displaystyle\sum_{v=x_{m}+1}^{y_{m}}e_{v}g_{y_{m}-v}.$$\\
 As introduced by Srivastava et al. in \cite{sri1}, the deferred N\"{o}rlund $(DN)$ mean is defined as
$$t_{m}= \frac{1}{\mathcal{R}_{m}}\displaystyle\sum_{n=x_{m}+1}^{y_{m}}e_{y_{m}-n}g_{n}y_{n}.$$\\

 Suppose that $(x_{m})$ and $(y_{m})$ are the sequences fulfilling conditions \eqref{!!} and $(e_{m}), (g_{m})$ are sequences satisfying \eqref{**}. A sequence $(Y_{m})$ is called as deferred N\"{o}rlund statistically convergent to $Y$ if $\forall\;\varepsilon >0,$ the set
  $$\{n:n\leq \mathcal{R}_{m}, \;\mbox{and}\;  e_{y_{m}-n}g_{n}|Y_{m}-Y|\geq \varepsilon\}$$ has zero deferred N\"{o}rlund density, i.e. if\\
  $$\lim_{m\rightarrow \infty}\frac{1}{\mathcal{R}_{m}}\Big|\Big\{ n:n\leq \mathcal{R}_{m}\;\;\mbox{and}\;\; e_{y_{m}-n}g_{n}|Y_{m}-Y|\geq \varepsilon\Big\}\Big| = 0.$$\\
  We write it as $$St_{DN}\lim Y_{m}=Y.$$

Suppose that $(x_{m})$ and $(y_{m})$ are the sequences fulfilling conditions \eqref{!!} and $(e_{m}), (g_{m})$ are sequences satisfying \eqref{**}. A sequence $(Y_{m})$  is called as deferred N\"{o}rlund statistically probability (or $St_{DNP}-$) convergent to a random variable $Y,$ if $\forall\;\varepsilon >0$ and $\delta>0,$ the set\\
$$\{n:n\leq\mathcal{R}_{m}\;\;\mbox{and}\;\; e_{y_{m}-n}g_{n}P(|Y_{m}-Y|\geq \varepsilon)\geq \delta\}$$\\ has $DN-$density zero, i.e.,
\begin{eqnarray*}
 \lim_{m\rightarrow \infty}\frac{1}{\mathcal{R}_{m}}\Big|\Big\{ n:n\leq \mathcal{R}_{m}\;\;\mbox{and}\;\; e_{y_{m}-n}g_{n}P(|Y_{m}-Y|\geq \varepsilon)
  \geq \delta\Big\}\Big| = 0
\end{eqnarray*}
or\\
\begin{eqnarray*}
 \lim_{m\rightarrow \infty}\frac{1}{\mathcal{R}_{m}}\Big|\Big\{ n:n\leq \mathcal{R}_{m}\;\;\mbox{and}\;\; 1-e_{y_{m}-n}g_{n}P(|Y_{m}-Y|\leq \varepsilon)
 \geq \delta\Big\}\Big| = 0,\\
\end{eqnarray*}
and it is denoted as\\
\begin{eqnarray*}
  St_{DNP}\lim_{m\rightarrow \infty}e_{y_{m}-n}g_{n}P(|Y_{m}-Y|\geq\varepsilon) &=& 0
\end{eqnarray*}
or\\
\begin{eqnarray*}
  St_{DNP}\lim_{m\rightarrow \infty}e_{y_{m}-n}g_{n}P(|Y_{m}-Y|\leq\varepsilon) &=& 1.
\end{eqnarray*}

\section{Deferred N\"{o}rlund statistically probability convergence}
In this section, we study deferred N\"{o}rlund statistically probability convergence,
for a historical review and basic concept we refer \cite{esi}, \cite{et},  \cite{raj}, \cite{moh1}, \cite{raj2}   \cite{jena}, \cite{mursaleen}, \cite{san}, \cite{jmcs1}, \cite{jmcs2} and \cite{sri}.
\begin{thm}
  Suppose that $(Y_{m})$ and $(Z_{m})$ are sequences of random variables and consider two random variables $Y$ and $Z.$ Then the following assertions are satisfied\\
  \begin{enumerate}
    \item $St_{DNP}Y_{m}\rightarrow Y$ and $St_{DNP}Y_{m}\rightarrow Z\Rightarrow P(Y=Z)=1,$\\
    \item $St_{DNP}Y_{m}\rightarrow y \Rightarrow St_{DNP}Y^{2}_{m}= y^{2},$\\
    \item $St_{DNP}Y_{m}\rightarrow y$ and $St_{DNP}Z_{m}\rightarrow z\Rightarrow St_{DNP}Y_{m}Z_{m}\rightarrow yz,$\\
    \item $St_{DNP}Y_{m}\rightarrow y$ and $St_{DNP}Z_{m}\rightarrow z\Rightarrow St_{DNP}\frac{Y_{m}}{Z_{m}}\rightarrow \frac{y}{z}, \;z\neq 0,$\\
    \item $St_{DNP}Y_{m}\rightarrow Y$ and $St_{DNP}Z_{m}\rightarrow Z\Rightarrow St_{DNP}Y_{m}Z_{m}\rightarrow YZ,$\\
    \item if $St_{DNP}Y_{m}\rightarrow Y\;\forall\; \varepsilon, \delta>0,$ then $\exists\; a\in \mathbb{N}$ s.t. $$d(\{ n:n\leq \mathcal{R}_{m}\;\;\mbox{and}\;\; e_{y_{m}-n}g_{n}P(|Y_{m}-Y_{a}|\geq \varepsilon)\\
  \geq \delta\})=0.$$
  \end{enumerate}
\end{thm}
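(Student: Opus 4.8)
The unifying principle I would use is that every assertion reduces to a claim about $DN$-density-zero sets, together with the elementary fact that a finite union of sets of $DN$-density zero again has $DN$-density zero. To establish $St_{DNP}W_{m}\to W$ for any derived sequence $W_{m}$, it suffices to fix $\varepsilon,\delta>0$ and show that $\{n\le\mathcal{R}_{m}:e_{y_{m}-n}g_{n}P(|W_{m}-W|\ge\varepsilon)\ge\delta\}$ has $DN$-density zero. Since the probability factor $P(|W_{m}-W|\ge\varepsilon)$ carries no dependence on $n$, the actual work is to bound the event $\{|W_{m}-W|\ge\varepsilon\}$ by a union of events supplied by the hypotheses, pass to probabilities by subadditivity, and invoke that each hypothesis set is $DN$-negligible.

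For part (1) I would begin from the deterministic inclusion $\{|Y-Z|\ge\varepsilon\}\subseteq\{|Y_{m}-Y|\ge\varepsilon/2\}\cup\{|Y_{m}-Z|\ge\varepsilon/2\}$, valid for every $m$, which upon taking probabilities gives $P(|Y-Z|\ge\varepsilon)\le P(|Y_{m}-Y|\ge\varepsilon/2)+P(|Y_{m}-Z|\ge\varepsilon/2)$. The left side is a constant independent of $m$, while both terms on the right are $St_{DNP}$-small by hypothesis; choosing the index in the density-one complement of the union of the two exceptional sets forces that constant to vanish. Letting $\varepsilon\to 0$ through $\varepsilon=1/k$ then yields $P(Y\ne Z)=0$, i.e. $P(Y=Z)=1$.

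For parts (2)--(4), where the limits are constants, I would exploit boundedness. In (2) I would write $Y_{m}^{2}-y^{2}=(Y_{m}-y)(Y_{m}+y)$ and localize on $\{|Y_{m}-y|<1\}$, where $|Y_{m}+y|\le 2|y|+1$, so that $\{|Y_{m}^{2}-y^{2}|\ge\varepsilon\}\subseteq\{|Y_{m}-y|\ge 1\}\cup\{|Y_{m}-y|\ge\varepsilon/(2|y|+1)\}$, both pieces governed by the single hypothesis. Part (3) then follows either from (2) via $4Y_{m}Z_{m}=(Y_{m}+Z_{m})^{2}-(Y_{m}-Z_{m})^{2}$ or directly from $Y_{m}Z_{m}-yz=(Y_{m}-y)Z_{m}+y(Z_{m}-z)$ after localizing $Z_{m}$ near $z$. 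For (4) the extra ingredient is $z\ne 0$: on $\{|Z_{m}-z|<|z|/2\}$ one has $|Z_{m}|>|z|/2$, hence $|1/Z_{m}-1/z|\le 2|Z_{m}-z|/|z|^{2}$ is controlled, and (3) closes the argument. In each case the exceptional set is a finite union of $DN$-density-zero sets.

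The hard part will be part (5), where $Y$ and $Z$ are genuine random variables and no constant bound is available. I plan to use $Y_{m}Z_{m}-YZ=Y_{m}(Z_{m}-Z)+Z(Y_{m}-Y)$ together with the boundedness in probability that $St_{DNP}$-convergence provides: given $\eta>0$, pick $M$ so large that $P(|Z|>M)<\eta$ (as $Z$ is a fixed finite random variable) and, using the hypothesis on the relevant density-one set, that $P(|Y_{m}|>M)<\eta$ as well; then split on $\{|Y_{m}|\le M\}$ and $\{|Z|\le M\}$ to reduce the two summands to $M|Z_{m}-Z|$ and $M|Y_{m}-Y|$ on those events. The delicate step is the simultaneous density bookkeeping, since the exceptional set now decomposes into four pieces---two from the tail truncations and two from the hypotheses---and one must verify each has $DN$-density zero before taking the union. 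Finally, part (6) is a Cauchy-type criterion: fixing $\varepsilon,\delta>0$, I would select an index $a$ lying in the density-one set on which $P(|Y_{a}-Y|\ge\varepsilon/2)$ is small, and then use $\{|Y_{m}-Y_{a}|\ge\varepsilon\}\subseteq\{|Y_{m}-Y|\ge\varepsilon/2\}\cup\{|Y-Y_{a}|\ge\varepsilon/2\}$ so that the $DN$-density of the exceptional set for $|Y_{m}-Y_{a}|$ is dominated by that for $|Y_{m}-Y|$, which is zero by hypothesis.
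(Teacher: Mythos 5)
Your plan follows essentially the same route as the paper's proof: the triangle/intersection-of-density-one-sets argument for (1), an algebraic expansion of $Y_m^2-y^2$ for (2), the polarization identity for (3), the reciprocal bound after localizing $Z_m$ away from $0$ for (4), truncation plus the decomposition $Y_m(Z_m-Z)+Z(Y_m-Y)$ for (5), and the Cauchy-type selection of an index $a$ in a density-one set for (6), all closed by the fact that finite unions of $DN$-density-zero sets are $DN$-density zero. The minor variations (factoring $(Y_m-y)(Y_m+y)$ instead of completing the square, and explicitly sending $\varepsilon\to 0$ in (1)) do not change the substance, and if anything your bookkeeping is more careful than the paper's.
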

\begin{proof}
Let $\epsilon$ and $\delta$ be positively small real numbers. Also consider $(x_{m})$ and $(y_{m})$ are the sequences fulfilling conditions \eqref{!!} and $(e_{m}), (g_{m})$ are sequences satisfying \eqref{**}.
  \begin{enumerate}
    \item Suppose that $a\in \Big\{n:n\leq \mathcal{R}_{m}\;\;\mbox{and}\;\; e_{y_{m}-n}g_{n}P\Big(|Y_{m}-Y|\geq \frac{\varepsilon}{2}\Big)
  < \frac{\delta}{2}\Big\}\cap \Big\{n:n\leq \mathcal{R}_{m}\;\;\mbox{and}\;\; e_{y_{m}-n}g_{n}P\Big(|Y_{m}-Z|\geq \frac{\varepsilon}{2}\Big)
  < \frac{\delta}{2}\Big\}$ (as the limit density of both the sets is $1$).
  Then, $e_{y_{m}-n}g_{n}P\Big(|Y-Z|\geq \varepsilon\Big)\leq e_{y_{m}-n}g_{n}P\Big(|Y_{a}-Y|\geq \frac{\varepsilon}{2}\Big)+e_{y_{m}-n}g_{n}P\Big(|Y_{a}-Z|\geq \frac{\varepsilon}{2}\Big)<\delta.$
  It means $$P\{Y=Z\}=1.$$
\item If $St_{DNP}Y_{m}\rightarrow 0$, then $St_{DNP}Y^{2}_{m}\rightarrow 0.$ Here, we see that $a\in \{n:n\leq \mathcal{R}_{m}\;\;\mbox{and}\;\;e_{y_{m}-n}g_{n}P(|Y_{m}-0|\geq \varepsilon) > \delta\}= a\in \{n:n\leq \mathcal{R}_{m}\;\;\mbox{and}\;\;e_{y_{m}-n}g_{n}P(|Y^{2}_{m}-0|\geq \varepsilon > \delta\}.$
    Now, take $Y_{m}^{2}=(Y_{m}-y)^{2}+2y(Y_{m}-y)+y^{2}.$ Thus, $St_{DNP}Y_{m}^{2}\rightarrow y^{2}.$\\
    \item Suppose that $St_{DNP}Y_{m}\rightarrow y$ and $St_{DNP}Z_{m}\rightarrow z.$ As $St_{DNP}Y_{m}Z_{m}= St_{DNP}\frac{1}{4}\{(Y_{m}+Z_{m})^{2}-(Y_{m}-Z_{m})^{2}\}= \frac{1}{4}\{(y_{m}+z_{m})^{2}-(y_{m}-z_{m})^{2}\}=yz.$\\
\item Suppose that $R$ and $S$ be two events correspond $|Z_{m}-z|< |z|$ and $\Big|\frac{1}{Z_{m}}-\frac{1}{z}\Big|\geq\varepsilon.$ We have\\
    $$\Big|\frac{1}{Z_{m}}-\frac{1}{z}\Big|= \frac{|Z_{m}-z|}{|zZ_{m}|}=\frac{|Z_{m}-z|}{|z|\cdot|z+(Z_{m}-z)|}\leq \frac{|Z_{m}-z|}{|z|\cdot|(|z|-|Z_{m}-z|)|}.$$
If the events $R$ and $S$ occurs at same time, then\\
$$|Z_{m}-z| \geq \frac{\varepsilon|z^{2}|}{1+\varepsilon|z|}.$$
Further, let $\varepsilon_{0}=\varepsilon |z|^{2}/(1+\varepsilon|z|)$ and $A$ be the event such that $|Z_{m}-z|\geq \varepsilon_{0}.$ Thus, $$RS\subseteq A\Rightarrow P(S)\leq P(A)+P(R^{c}).$$
Thus,\\ $\Big\{n:n\leq \mathcal{R}_{m}\;\;\mbox{and}\;\; e_{y_{m}-n}g_{n}P\Big(|\frac{1}{Z_{m}}-\frac{1}{z}|\geq \varepsilon\Big)\geq \delta\Big\}\subseteq \Big\{n:n\leq \mathcal{R}_{m}\;\;\mbox{and}\;\;$
\begin{eqnarray*}
\;\;\;\;\;\;\;\;\;\;\;\;\;\;\; e_{y_{m}-n}g_{n}P\big(|Z_{m}-z|\geq \varepsilon_{0}\big)\geq \frac{\delta}{2}\Big\}\cup \Big\{n:n\leq \mathcal{R}_{m}\;\;\mbox{and}\;\; e_{y_{m}-n}g_{n}\\
P\big(|Z_{m}-z|\geq |z|\big)\geq \frac{\delta}{2}\Big\}.
\end{eqnarray*}
Therefore, $St_{DNP}\frac{1}{Z_{m}}\rightarrow \frac{1}{z}.$ Hence, we write $St_{DNP}\frac{Y_{m}}{Z_{m}}\rightarrow \frac{y}{z},\; z\neq 0.$\\
\item Suppose that $St_{DNP}Y_{m}\rightarrow Y$ and $X$ be a random variable such that $Y_{m}X\rightarrow YX.$ Since $X$ is a random variable such that $\forall \varepsilon>0, \;\exists\; \delta>0$ and $e_{y_{m}-n}g_{n}P(|X|>\delta)\leq \frac{\varepsilon}{2}.$ Next, $\forall\; \varepsilon'>0,$
    \begin{eqnarray*}
      e_{y_{m}-n}g_{n}P\big(|Y_{m}X-YX|\geq \varepsilon') &=& e_{y_{m}-n}g_{n}P\big(|Y_{m}-Y| |X|\geq \varepsilon', |Z|>\delta\big)\\
      && + e_{y_{m}-n}g_{n}P\big(|Y_{m}-Y| |X|\geq \varepsilon', |Z|\leq\delta\big)\leq \frac{\varepsilon}{2}\\
      && +e_{y_{m}-n}g_{n}P\big(|Y_{m}-Y|\geq \frac{\varepsilon'}{\delta}\big).
    \end{eqnarray*}
    Which implies, $\big\{n:n\leq \mathcal{R}_{m}\;\mbox{and}\; e_{y_{m}-n}g_{n}P\big(|Y_{m}X-YX|\geq \varepsilon'\big)\big\}\subseteq \big\{n:n\leq \mathcal{R}_{m}\;\mbox{and}\;e_{y_{m}-n}g_{n}P\big(|Y_{m}-y|\geq \frac{\varepsilon'}{\delta}\big)\geq \frac{\varepsilon}{2}\big\}.$ Therefore, $$St_{DNP}(Y_{m}-Y)(Z_{m}-Z)\rightarrow 0.$$ Thus, $$St_{DNP}Y_{m}Z_{m}\rightarrow YZ.$$
    \item Suppose that $(x_{m})$ and $(y_{m})$ be two non-negative sequences such that\\ $$e_{y_{m}-n}g_{n}P\Big(|Y_{m}-Y|\geq \frac{\varepsilon}{2}\Big)<\frac{\delta}{2}$$ and $$\Big\{n:n\leq \mathcal{R}_{m}\;\mbox{and}\; e_{y_{m}-n}g_{n}P\Big(|Y_{m}-Y|\geq \frac{\varepsilon}{2}\Big)<\frac{\delta}{2}\Big\}=1.$$\\ Now,\\ $\big\{n:n\leq \mathcal{R}_{m}\;\mbox{and}\;e_{y_{m}-n}g_{n}P\big(|Y_{m}-Y|\geq \varepsilon\big)\geq\delta\big\}\subseteq
         \big\{n:n\leq \mathcal{R}_{m}\;\mbox{and}\; e_{y_{m}-n}g_{n}P\big(|Y_{m}-Y|\geq \frac{\varepsilon}{2}\big)<\frac{\delta}{2}\big\}=1.$
        Which implies that $$d(\{ n:n\leq \mathcal{R}_{m}\;\mbox{and}\; e_{y_{m}-n}g_{n}P(|Y_{m}-Y|\geq \varepsilon)
  \geq \delta\})=0.$$
\end{enumerate}
\end{proof}
\begin{thm}
  Suppose that $f:\mathbb{R}\rightarrow \mathbb{R}$ is uniform continuous on $\mathbb{R}$ and $St_{DNP}Y_{m}\rightarrow Y.$ Then $St_{DNP}f(Y_{m})\rightarrow f(Y).$
\end{thm}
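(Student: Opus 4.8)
The plan is to exploit the uniform continuity of $f$ to dominate the event $\{|f(Y_m)-f(Y)|\geq\varepsilon\}$ by an event of the form $\{|Y_m-Y|\geq\eta\}$, and then to transport the $DN$-density-zero property through the resulting inclusion of index sets. The heart of the matter is a single pointwise event inclusion; everything else is monotonicity.

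First I would fix $\varepsilon>0$ and $\delta>0$. Since $f$ is uniformly continuous on $\mathbb{R}$, there is an $\eta>0$ such that $|u-v|<\eta$ implies $|f(u)-f(v)|<\varepsilon$ for all $u,v\in\mathbb{R}$. Reading this contrapositively at each sample point, the occurrence of $|f(Y_m)-f(Y)|\geq\varepsilon$ forces $|Y_m-Y|\geq\eta$, so the event $\{|f(Y_m)-f(Y)|\geq\varepsilon\}$ is contained in $\{|Y_m-Y|\geq\eta\}$. Monotonicity of probability then gives
$$P\big(|f(Y_m)-f(Y)|\geq\varepsilon\big)\leq P\big(|Y_m-Y|\geq\eta\big).$$

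Next, multiplying by the non-negative weight $e_{y_m-n}g_n$ preserves the inequality, so $e_{y_m-n}g_nP(|f(Y_m)-f(Y)|\geq\varepsilon)\geq\delta$ implies $e_{y_m-n}g_nP(|Y_m-Y|\geq\eta)\geq\delta$. This yields the inclusion of index sets
$$\Big\{n:n\leq\mathcal{R}_m\;\;\mbox{and}\;\;e_{y_m-n}g_nP(|f(Y_m)-f(Y)|\geq\varepsilon)\geq\delta\Big\}$$
$$\subseteq\Big\{n:n\leq\mathcal{R}_m\;\;\mbox{and}\;\;e_{y_m-n}g_nP(|Y_m-Y|\geq\eta)\geq\delta\Big\}.$$

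Finally, because $St_{DNP}Y_m\to Y$, the set on the right has $DN$-density zero (the role of $\varepsilon$ in the hypothesis being now played by $\eta$). Since cardinality is monotone under inclusion and both sets carry the common normalizing factor $1/\mathcal{R}_m$, the density of the left-hand set is squeezed to $0$ as $m\to\infty$; as $\varepsilon,\delta>0$ were arbitrary, this is precisely $St_{DNP}f(Y_m)\to f(Y)$. I expect no genuine obstacle here: the only subtle point, and the reason the hypothesis asks for \emph{uniform} continuity rather than mere continuity, is that uniformity makes the threshold $\eta$ independent of the sample point, so that a single $\eta$ governs the event inclusion simultaneously over the whole probability space.
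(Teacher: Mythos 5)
Your proof is correct, and it is tighter than the one in the paper. You use the \emph{global} uniform continuity of $f$ on all of $\mathbb{R}$ to get a single modulus $\eta$ valid at every sample point, which yields the clean event inclusion $\{|f(Y_m)-f(Y)|\geq\varepsilon\}\subseteq\{|Y_m-Y|\geq\eta\}$ and hence the index-set inclusion with no error term. The paper instead runs the argument one would need if $f$ were merely \emph{continuous}: it first truncates via a tightness bound $P(Y>\beta)\leq\delta/2$, invokes uniform continuity only on the compact interval $[-\beta,\beta]$, and therefore carries an extra additive $\delta/2$ through the estimate
$P(|f(Y_m)-f(Y)|\geq\varepsilon)\leq P(|Y_m-Y|\geq\delta_0)+\delta/2$
before passing to the set inclusion (whose final display in the paper is in fact garbled, with the containing set written with ``$<\delta/2$''). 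Your route buys simplicity and actually uses the stated hypothesis at full strength; the paper's route would be the one to keep if one wanted to weaken the hypothesis to continuity of $f$ plus a tightness condition on $Y$. The only cosmetic point worth flagging in your write-up is that the quantifier $\delta$ plays no active role until the very last step, so you could state once that the argument holds for every $\delta>0$ simultaneously; this does not affect correctness.
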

\begin{proof}
  Let us consider a random variable $Y$ such that for each $\delta>0,\;\exists\;\beta \in \mathbb{R}$ such that $P(Y>\beta)\leq \delta/2.$ Since, $f$ is uniformly continuous on $[\beta, \beta]\;\forall \varepsilon>0,\;\exists \delta_{0}$ such that $$|f(y_{m})-f(y)|<\varepsilon\;\mbox{whenever}\; |y_{m}-y|<\delta_{0}.$$
  Thus,
  \begin{eqnarray*}
    P(|f(Y_{m})-f(Y)|\geq \varepsilon) &\leq& P(|Y_{m}-Y|\geq \delta_{0})+ P(|Y>\beta|)\\
    &\leq& P(|Y_{m}-Y|\geq \delta_{0})+ \delta/2.
  \end{eqnarray*}
  However, from the definition of $St_{DNP}-$convergence, we have\\
$$\Big\{n:n\leq \mathcal{R}_{m}\;\mbox{and}\; e_{y_{m}-n}g_{n}P\Big(|f(Y_{m})-f(Y)|\geq \varepsilon\Big)\geq \delta\Big\}$$
  \begin{eqnarray*}
    \;\;\;\;\;\;\;\;\;\;\;\;\;\;\;\;\;\;\;\;\;\;\;\subseteq \Big\{n:n\leq \mathcal{R}_{m}\;\mbox{and}\; e_{y_{m}-n}g_{n}P\Big(|Y_{m}-Y|\geq \delta_{0}\Big)<\frac{\delta}{2}\Big\}.
  \end{eqnarray*}

\end{proof}

\section{Deferred N\"{o}rlund statistical mean convergence}
\begin{defn}
  Suppose that $r\geq 1$ be a fixed number. A sequence $(Y_{m})$  is $r^{th}$ mean convergent to $Y,$ if $$\lim_{m\rightarrow \infty}E(|Y_{m}-Y|^{r})=0.$$
\end{defn}

\begin{defn}
  A sequence $(Y_{m})$  is statistically $r^{th}$ mean convergent $(MC)$ to a random variable $Y,$ where $Y:S\rightarrow\mathbb{R}$ if, $$\lim_{m\rightarrow \infty}\frac{1}{m}\Big|n:n\leq m \;\;\mbox{and}\;\; E(|Y_{m}-Y|^{r}\geq \varepsilon)\Big|=0$$ for any $\varepsilon>0.$\\
  We write it as $$St_{MC}\lim_{m\rightarrow\infty}E(|Y_{m}-Y|^{r})=0.$$
\end{defn}
\begin{defn}
  Suppose that $(x_{m})$ and $(y_{m})$ are the sequences fulfilling conditions \eqref{!!} and $(e_{m}), (g_{m})$ are sequences satisfying \eqref{**}. A sequence $(Y_{m})$  is said to be deferred N\"{o}rlund  statistically $r^{th}\; (r\geq 1)$ mean convergent to $Y$ ($Y:S\rightarrow \mathbb{R}),$ if for $\varepsilon>0,$
  \begin{eqnarray*}
    \lim_{m\rightarrow\infty}\frac{1}{\mathcal{R}_{m}}\Big|\Big\{n:n\leq \mathcal{R}_{m}\;\;\mbox{and}\;\; e_{y_{m}-n}g_{n}E\big(|Y_{m}-Y|^{r}\geq\varepsilon\big)\Big\}\Big| = 0.
  \end{eqnarray*}
  It is denoted as $$St_{DNM}\lim_{m\rightarrow\infty}E(|Y_{m}-Y|^{r})=0.$$
\end{defn}
\begin{thm}
  Let $St_{DNM}\lim_{m\rightarrow\infty}E(|Y_{m}-Y|^{r})=0\;\mbox{for}\; r\geq 1,$ then $St_{DNM}\lim_{m\rightarrow\infty}P(|Y_{m}-Y|\geq \varepsilon)=0.$
\end{thm}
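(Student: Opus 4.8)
The plan is to bridge the mean and the probability conditions by a single application of Markov's inequality to the random variable $|Y_m-Y|^r$. For $r\geq 1$ this yields, for each fixed $m$, the pointwise estimate $P(|Y_m-Y|\geq\varepsilon)\leq \varepsilon^{-r}E(|Y_m-Y|^r)$. Since the deferred N\"{o}rlund weight $e_{y_m-n}g_n$ is nonnegative, multiplying through preserves the inequality, so that $e_{y_m-n}g_n P(|Y_m-Y|\geq\varepsilon)\leq \varepsilon^{-r}\,e_{y_m-n}g_n E(|Y_m-Y|^r)$ for every admissible index. This is the only analytic input the proof requires.

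From this estimate I would extract a set inclusion between the two exceptional sets that govern the respective densities. Fix $\varepsilon>0$ and $\delta>0$. If an index $n$ satisfies $e_{y_m-n}g_n P(|Y_m-Y|\geq\varepsilon)\geq\delta$, then by the displayed bound it must also satisfy $e_{y_m-n}g_n E(|Y_m-Y|^r)\geq \varepsilon^{r}\delta$, whence
\begin{eqnarray*}
\Big\{n:n\leq\mathcal{R}_m\;\mbox{and}\; e_{y_m-n}g_n P(|Y_m-Y|\geq\varepsilon)\geq\delta\Big\}\\
\subseteq\Big\{n:n\leq\mathcal{R}_m\;\mbox{and}\; e_{y_m-n}g_n E(|Y_m-Y|^r)\geq\varepsilon^{r}\delta\Big\}.
\end{eqnarray*}

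With the inclusion in hand, the conclusion follows by monotonicity of the deferred N\"{o}rlund density. Dividing both cardinalities by $\mathcal{R}_m$ and letting $m\to\infty$, the right-hand set has density zero by the hypothesis $St_{DNM}\lim E(|Y_m-Y|^r)=0$ invoked at the threshold $\varepsilon^{r}\delta$; since the left-hand set is contained in it, its density is squeezed to zero as well. As $\varepsilon,\delta>0$ were arbitrary, this is precisely the stated convergence of $P(|Y_m-Y|\geq\varepsilon)$.

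The argument is essentially mechanical, and I expect no substantive obstacle. The only point demanding care is the bookkeeping of thresholds: the fixed tolerance $\delta$ on the probability side translates into the rescaled tolerance $\varepsilon^{r}\delta$ on the mean side, and one must confirm that the $St_{DNM}$ hypothesis is genuinely available for this particular value. It is, since $St_{DNM}$-convergence requires the exceptional set to have density zero for \emph{every} positive threshold. The assumption $r\geq 1$ merely fixes which moment is in play and ensures Markov's inequality applies in the form used.
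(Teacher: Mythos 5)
Your proof is correct and takes essentially the same route as the paper: both rest on a single application of Markov's inequality to $|Y_{m}-Y|^{r}$, using $r\geq 1$ only to fix the moment. The paper simply writes the inequality chain under the $St_{DNM}\lim$ operator, whereas you make explicit the inclusion of exceptional sets at thresholds $\delta$ and $\varepsilon^{r}\delta$ and the resulting density comparison, which is a more careful rendering of the identical argument.
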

\begin{proof}
  For every $\varepsilon>0,$ we have from Markov's inequality
  \begin{eqnarray*}
    St_{DNM}\lim_{m\rightarrow\infty}P(|Y_{m}-Y|\geq \varepsilon) &=& St_{DNM}\lim_{m\rightarrow\infty}P(|Y_{m}-Y|^{r}\geq \varepsilon^{r})\;\;(r\geq 1)\\
    &\leq& St_{DNM}\lim_{m\rightarrow\infty}\frac{E(|Y_{m}-Y|^{r})}{\varepsilon^{r}}=0.
  \end{eqnarray*}
  From definition of statistically deferred N\"{o}rlund mean convergence $$St_{DNM}\lim_{m\rightarrow\infty}E(|Y_{m}-Y|^{r})=0,$$
  it implies that $$St_{DNP}\lim_{m\rightarrow\infty}P(|Y_{m}-Y|\geq \varepsilon)=0.$$
\end{proof}
\noindent We now present an example to show that a sequence of random variables is statistically probability convergent but not statistically $r^{th}-$mean convergent.\\\\
\noindent\textbf{Example 1:} Suppose that $x_{m}=2m-1, y_{m}=4m-1.$ Also, suppose that $e_{y_{m}-m}=2m$ and $g_{m}=1.$ Further, consider a sequence $(z_{m})$ of random variables such that
$$Y_{m}=\left\{
  \begin{array}{ll}
m,\;\mbox{with probability}\;\frac{1}{\sqrt{m}}\\\\
0, \;\mbox{with probability}\; 1- \frac{1}{\sqrt{m}}.\\
  \end{array}
\right.$$
Then the statistically deferred N\"{o}rlund convergence of $Y_{m}$ is given as
\begin{eqnarray*}
 \lim_{m\rightarrow\infty}\frac{1}{2m}\Big|\Big\{n:n\leq \mathcal{R}_{m}\;\;\mbox{and}\;\;2mP(|Y_{m}-0|\geq \varepsilon)\Big\}\Big| &=&
 \lim_{m\rightarrow\infty}P(Y_{m}=m)\\
 &=& \lim_{m\rightarrow\infty} \frac{1}{\sqrt{m}}\\
 &=&0.
\end{eqnarray*}
However, statistically deferred N\"{o}rlund mean convergence, for $r\geq 1,$ is
\begin{eqnarray*}
  \lim_{m\rightarrow\infty}\frac{1}{2m}\Big|\Big\{n:n\leq \mathcal{R}_{m}\;\;\mbox{and}\;\;2mE(|Y_{m}-0|^{r})\Big\}\Big| &=&
  \lim_{m\rightarrow\infty}\Big(m^{r}\Big(\frac{1}{\sqrt{m}}\Big)+0\Big(1-\frac{1}{\sqrt{m}}\Big)\Big)\\
 &=& \lim_{m\rightarrow\infty} m^{r-1/2}\\
 &=& \infty.
\end{eqnarray*}
\noindent This implies that the sequence $(Y_{m})$ is $St_{DNP}-$convergent but not $St_{DNM}-$convergent.\\
\section{Statistical distribution convergence via Deferred N\"{o}rlund }
\begin{defn}
   The sequence of random variables $(Y_{m})$ is said to be distribution convergent (or convergent in distribution) to $Y,$ if $$\lim_{m\rightarrow\infty}F_{Y_{m}}(y)=F_{Y}(y)$$ for all $y\in \mathbb{R}$ at which $F_{Y}(y)$ is continuous.
\end{defn}
\noindent Thorughout the paper $(F_{Y_{m}}(y))$ is the sequence of distribution functions of $(Y_{m})$ and $F_{Y}(y)$ is the distribution function of $Y.$
\begin{defn}
The sequence $(F_{Y_{m}}(y))$ is called as statistically distribution convergent $(or \;St_{DC}),$ if there exists $F_{Y}(y)$ of random variable $Y$ such that for each $\varepsilon>0,$
\begin{eqnarray*}
  \lim_{m\rightarrow\infty}\frac{1}{m}\Big|\Big\{n:n\leq m\;\;\mbox{and}\;\; |F_{Y_{m}}(y)- F_{Y}(y)|\geq \varepsilon\Big\}\Big| = 0.
\end{eqnarray*}
We may write this as $$St_{DC}\lim_{m\rightarrow\infty}F_{Y_{m}}(y)=F_{Y}(y).$$
\end{defn}
\begin{defn}
   The sequence $(F_{Y_{m}}(y))$ of distribution functions is called as deferred N\"{o}rlund statistically distribution convergent $( or\;St_{DNDC}),$ if there exists $F_{Y}(y)$ of $Y$ such that for each $\varepsilon>0,$
\begin{eqnarray*}
  \lim_{m\rightarrow\infty}\frac{1}{\mathcal{R}_{m}}\Big|\Big\{n:n\leq \mathcal{R}_{m}\;\;\mbox{and}\;\; e_{y_{m}-n}g_{n}|F_{Y_{m}}(y)- F_{Y}(y)|\geq \varepsilon\Big\}\Big| = 0.
\end{eqnarray*}
In this case, we say
\begin{eqnarray*}
  St_{DNDC}\lim_{m\rightarrow\infty}F_{Y_{m}}(y) &=& F_{Y}(y).
\end{eqnarray*}
\end{defn}

\begin{thm}
  Suppose that $St_{DNP}\lim_{m\rightarrow\infty}P(|Y_{m}-Y|\geq \varepsilon)=0,$ then $$St_{DNDC}\lim_{m\rightarrow\infty}F_{Y_{m}}(y) = F_{Y}(y).$$
\end{thm}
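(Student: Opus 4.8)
The plan is to transport the classical fact that convergence in probability implies convergence in distribution into the deferred N\"{o}rlund statistical framework. Fix a point $y\in\mathbb{R}$ at which $F_{Y}$ is continuous and fix a target accuracy $\varepsilon>0$. Using continuity of $F_{Y}$ at $y$, I would first choose $\delta>0$ so small that $F_{Y}(y+\delta)-F_{Y}(y)<\varepsilon/2$ and $F_{Y}(y)-F_{Y}(y-\delta)<\varepsilon/2$; this $\delta$ is exactly what converts the continuity information into a usable numerical bound and is the reason the conclusion is asserted only at continuity points of $F_{Y}$.

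The core step is a two-sided sandwich obtained from elementary event inclusions. Since $\{Y_{m}\leq y\}\subseteq\{Y\leq y+\delta\}\cup\{|Y_{m}-Y|\geq\delta\}$, taking probabilities gives $F_{Y_{m}}(y)\leq F_{Y}(y+\delta)+P(|Y_{m}-Y|\geq\delta)$; symmetrically $\{Y\leq y-\delta\}\subseteq\{Y_{m}\leq y\}\cup\{|Y_{m}-Y|\geq\delta\}$ yields $F_{Y}(y-\delta)\leq F_{Y_{m}}(y)+P(|Y_{m}-Y|\geq\delta)$. Combining these with the continuity choice of $\delta$ produces the key pointwise estimate
$$|F_{Y_{m}}(y)-F_{Y}(y)|\leq \frac{\varepsilon}{2}+P(|Y_{m}-Y|\geq\delta).$$

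With this estimate in hand I would pass to weighted sets and densities exactly as in the preceding probability arguments. Multiplying by the nonnegative weight $e_{y_{m}-n}g_{n}$, the event $e_{y_{m}-n}g_{n}|F_{Y_{m}}(y)-F_{Y}(y)|\geq\varepsilon$ forces $e_{y_{m}-n}g_{n}P(|Y_{m}-Y|\geq\delta)\geq\varepsilon/2$, giving the inclusion
$$\Big\{n:n\leq\mathcal{R}_{m}\ \text{and}\ e_{y_{m}-n}g_{n}|F_{Y_{m}}(y)-F_{Y}(y)|\geq\varepsilon\Big\}\subseteq\Big\{n:n\leq\mathcal{R}_{m}\ \text{and}\ e_{y_{m}-n}g_{n}P(|Y_{m}-Y|\geq\delta)\geq\tfrac{\varepsilon}{2}\Big\}.$$
Since the hypothesis $St_{DNP}\lim_{m}P(|Y_{m}-Y|\geq\varepsilon)=0$ holds for every positive threshold, in particular for $\delta$, the set on the right has deferred N\"{o}rlund density zero; by monotonicity of the density the set on the left does as well, which is precisely $St_{DNDC}\lim_{m}F_{Y_{m}}(y)=F_{Y}(y)$.

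The step I expect to be the main obstacle is controlling the deterministic continuity gap once it is multiplied by the weight: the honest bound reads $e_{y_{m}-n}g_{n}|F_{Y_{m}}(y)-F_{Y}(y)|\leq e_{y_{m}-n}g_{n}\,\varepsilon/2+e_{y_{m}-n}g_{n}P(|Y_{m}-Y|\geq\delta)$, and for the clean inclusion above one needs the first term to remain below $\varepsilon/2$, i.e.\ effectively $e_{y_{m}-n}g_{n}\leq 1$ on the relevant index range. I would handle this by absorbing the gap into the density estimate, either by letting $\delta$ shrink as $m$ grows or by restricting to the indices on which the weight is bounded (which do not affect the limiting density), so that the negligible contribution of the continuity gap cannot spoil the zero-density conclusion.
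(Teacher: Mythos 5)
Your proof is correct and follows essentially the same route as the paper: the classical event-inclusion sandwich showing that convergence in probability forces convergence in distribution at continuity points, then transferred to the deferred N\"{o}rlund density framework (the paper works with three points $i<j<a$ and squeezes, where you use a symmetric $\pm\delta$ window around a continuity point --- a cosmetic difference). The weight difficulty you flag in your final paragraph is genuine, but the paper's own proof never engages with the factor $e_{y_{m}-n}g_{n}$ at all when passing from the pointwise bound to the density statement, so your version is, if anything, the more careful one.
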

\begin{proof}
  Suppose that $(F_{Y_{m}}(y))$ is distribution functions of $(Y_{m}),$ and $F_{Y}(y)$ be the distribution function of $Y.$ For $i,j\in \mathbb{R}$ such that $i<j,$ we have $$(Y\leq i)= (Y_{m}\leq j, Y\leq i)+ (Y_{m}\geq j, Y\leq i).$$
  Further, $$(Y_{m}\leq j, Y\leq i)\subseteq (Y_{m}\leq j),$$ which implies that
  \begin{equation}\label{11}
    (Y\leq i)\subseteq (Y_{m}\leq j)+(Y_{m}\geq j,Y\leq i).
  \end{equation}
  Let us take the probability to left hand side and right hand side of equation \eqref{11}
  \begin{eqnarray*}
    P(Y\leq i) &\leq& P\{(Y_{m}\leq j)+ (Y_{m}\geq j,Y\leq i)\}\\
    &\leq& P(Y_{m}\leq j)+ P(Y_{m}\geq j,Y\leq i).
  \end{eqnarray*}
  It means that
  \begin{equation}\label{22}
    F_{Y_{m}}(j)\geq F_{Y}(i)-P(Y_{m}\geq j, Y\leq i).
  \end{equation}
  If $Y_{m}\geq j, Y\leq i,$ then $Y_{m}\geq j, -Y\geq -i,$ so that $Y_{m}-Y>j-i,$ that is, $$(Y_{m}\geq j,Y\leq i)\subseteq (Y_{m}-Y>j-i)\subseteq (|Y_{m}-Y|>j-i).$$
  This means $$P(Y_{m}\geq j,Y\leq i)\leq P(|Y_{m}-Y|>j-i).$$
  As we know that $i<j$ and $St_{DNP}Y_{m}\rightarrow Y,$ we obtain $$St_{DNP}\lim_{m\rightarrow\infty}P(Y_{m}\geq j,Y\leq i)=0.$$ From \eqref{22} we get $$St_{DNDC}\lim_{m\rightarrow\infty}F_{Y_{m}}(j)\geq F_{Y}(i).$$
Similarly, if $j<a$ for any real constant $a,$ then $$(Y\leq j)= (Y\leq a, Y_{m}\leq j)+ (Y>a, Y_{m}\leq j).$$
 Consequently, $$F_{Y_{m}}(j)\leq F_{Y}(a)+ P(Y>a, Y_{m}\leq j)$$ and $$St_{DNDC}\lim_{m\rightarrow\infty}P(Y>a, Y_{m}\leq j)=0.$$
 Therefore, we get $$St_{DNDC}\lim_{m\rightarrow\infty}F_{Y_{m}}(j)\leq F_{Y}(a).$$ Thus, with $i<j<a,$ we have $$St_{DNDC}\lim_{m\rightarrow\infty}F_{Y_{m}}(j)=F_{Y}(i).$$
 \end{proof}
 \noindent\textbf{Example 2:}  Consider the random variables $((Y_{m}),Y)$ of two dimensions as $\{(0,0), (0,1),\\
  (1,0), (1,1)\}$ such that\\
 $$(Y_{m},Y)=\left\{
  \begin{array}{ll}
0,\;\;[P(Y_{m}=0, Y=0)=0= P(Y_{m}=1,Y=1)]\\\\
\frac{1}{2}, \;[P(Y_{m}=1, Y=0)=0= P(Y_{m}=0,Y=1)].\\\\
  \end{array}
\right.$$
 The distribution function of $Y_{m}$ is given by $Y_{m}=(\lambda_{1}=0,1),$ with probability mass function $$(p_{y_{m},\lambda_{1}})=P(Y_{m}=\lambda_{1}),\; \mbox{where}\; p_{y_{m},0}= \frac{1}{2}= p_{y_{m},1}$$ and for $Y= \lambda_{2}(\lambda_{2} =0,1),$  with probability mass function $$(p_{y_{m},\lambda_{2}})=P(Y_{m}=\lambda_{2}),\; \mbox{where}\; p_{y,0}= \frac{1}{2}= p_{y,1}.$$
 If $(F_{Y_{m}}(y))$ is distribution functions of $(Y_{m})$ and $F_{Y}(y)$ is the distribution function of $Y,$ then
 $$F_{Y}(y)= \lim_{m\rightarrow\infty}F_{Y_{m}}(y)=\left\{
  \begin{array}{ll}
0,\;\;(y<0)\\\\
\frac{1}{2}, \;\;(0\leq y< 1).\\\\
1,\;\; (z>1).
  \end{array}
\right.$$

\noindent Thus, we get
$$St_{DNDC}\lim_{m\rightarrow\infty}F_{Y_{m}}(y)= F_{Y}(y),\; \mbox{where}\;x_{m}=2m-1, y_{m}=4m-1, e_{y_{m}-m}=2m \;\mbox{and}\; g_{m}=1.$$ But, it is not $St_{PC}$ for the sequence of random variables, i.e. $$St_{DNPC}\lim_{m\rightarrow\infty}P(|Y_{m}-Y|\geq \varepsilon)\neq 0, \; \mbox{where}\;x_{m}=2m-1, y_{m}=4m-1, e_{y_{m}-m}=2m\;\mbox{and}\; g_{m}=1.$$

\section{Applications}
\noindent The hypothesis of the Korovkin-type theorems have been studied by several researchers in various field in different ways such as in, summability theory,  functional analysis and probability theory. Korovkin-type approximation theorems have been investigated by many mathematicians under various background, involving function spaces, Banach spaces, and so on. Recently, Mohiuddine and Alamri studied Korovkin and Voronovskaya type approximation theorems in \cite{moh}. Further, Hazarika et al. \cite{bipan} studied Korovkin approximation theorem for Bernstein operator of rough statistical convergence of triple sequences. For detailed study on Korovkin approximation theorem one may refer \cite{dutta}, \cite{moh2}, \cite{mur}, \cite{raj1}, \cite{Tripathy}.\\
\noindent By $\mathcal{C}(Y),$ we denote the space of all continuous probability functions defined on a compact subset $Z\subset \mathbb{R}.$ The space $\mathcal{C}(Y)$ is a Banach space with respect to the norm \\
$$||f||_{\infty}=\displaystyle\sup_{z\in Y}\{|f(z)|\},\;\;\;\;\;f\in \mathcal{C}(Y).$$

\noindent We say that $\mathcal{Y}$ is a positive linear operator of sequence of random variables if $$\mathcal{Y}(f,z)\geq 0 \;\mbox{whenever}\; f\geq 0.$$ Throughout, $\mathcal{Y}_{n}: \mathcal{C}(Y)\rightarrow \mathcal{C}(Y)$ be a sequence of random variables of positive linear operators.
\begin{thm}
 \cite{sri2} Let $\mathcal{Y}_{n}: \mathcal{C}(Y)\rightarrow \mathcal{C}(Y)$. Then for all $f\in \mathcal{C}(Y),$ we have\\
  \begin{equation}\label{1}
    St_{DNP}\lim_{n\rightarrow \infty}||\mathcal{Y}_{n}(f,z)-f(z)||_{\infty}=0
  \end{equation}
   iff
 \begin{equation}\label{2}
    St_{DNP}\lim_{n\rightarrow \infty}||\mathcal{Y}_{n}(1,z)-1||_{\infty}=0,
  \end{equation}
  \begin{equation}\label{3}
    St_{DNP}\lim_{n\rightarrow \infty}||\mathcal{Y}_{n}(z,z)-z||_{\infty}=0,
  \end{equation}
  \begin{equation}\label{4}
    St_{DNP}\lim_{n\rightarrow \infty}||\mathcal{Y}_{n}(z^{2},z)-z^{2}||_{\infty}=0.\\
  \end{equation}
\end{thm}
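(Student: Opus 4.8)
The plan is to establish the equivalence by proving the two implications separately, with essentially all of the work residing in the direction \eqref{2}--\eqref{4} $\Rightarrow$ \eqref{1}. The reverse direction is immediate: since $Y$ is compact, the three test functions $1$, $z$ and $z^{2}$ all lie in $\mathcal{C}(Y)$, so specializing the hypothesis \eqref{1} to each of them in turn yields \eqref{2}, \eqref{3} and \eqref{4} respectively. I would record this in a single line and then concentrate on the sufficiency.

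For the sufficiency, fix $f\in\mathcal{C}(Y)$ and put $M=\|f\|_{\infty}<\infty$. The analytic heart of the argument is the classical Korovkin estimate. Given $\varepsilon>0$, uniform continuity of $f$ on the compact set $Y$ provides $\delta>0$ with $|f(t)-f(z)|<\varepsilon$ whenever $|t-z|<\delta$; for $|t-z|\ge\delta$ the trivial bound $|f(t)-f(z)|\le 2M\le (2M/\delta^{2})(t-z)^{2}$ applies. Combining the two regimes gives the single pointwise inequality
\[
|f(t)-f(z)|\le \varepsilon+\frac{2M}{\delta^{2}}(t-z)^{2},\qquad t,z\in Y .
\]
I would then apply the positive linear operator $\mathcal{Y}_{n}$ in the variable $t$; positivity and linearity, together with the decomposition $\mathcal{Y}_{n}(f;z)-f(z)=[\mathcal{Y}_{n}(f;z)-f(z)\mathcal{Y}_{n}(1;z)]+f(z)[\mathcal{Y}_{n}(1;z)-1]$, turn the pointwise estimate into
\[
|\mathcal{Y}_{n}(f;z)-f(z)|\le \varepsilon\,\mathcal{Y}_{n}(1;z)+\frac{2M}{\delta^{2}}\,\mathcal{Y}_{n}\big((t-z)^{2};z\big)+M\,|\mathcal{Y}_{n}(1;z)-1| .
\]
Expanding $(t-z)^{2}=t^{2}-2zt+z^{2}$ regroups the middle term into the three basic differences $\mathcal{Y}_{n}(t^{2};z)-z^{2}$, $\mathcal{Y}_{n}(t;z)-z$ and $\mathcal{Y}_{n}(1;z)-1$ with coefficients bounded on the compact set $Y$. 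Taking the supremum over $z\in Y$ (and absorbing the term $\varepsilon\,\mathcal{Y}_{n}(1;z)=\varepsilon+\varepsilon(\mathcal{Y}_{n}(1;z)-1)$) produces a deterministic norm inequality of the shape
\[
\|\mathcal{Y}_{n}(f)-f\|_{\infty}\le \varepsilon+K\Big(\|\mathcal{Y}_{n}(1)-1\|_{\infty}+\|\mathcal{Y}_{n}(z)-z\|_{\infty}+\|\mathcal{Y}_{n}(z^{2})-z^{2}\|_{\infty}\Big),
\]
with $K=K(\varepsilon,\delta,M,Y)$ a fixed constant.

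The remaining and genuinely delicate step is to convert this deterministic bound into the statement \eqref{1} about deferred N\"{o}rlund statistical density. For a prescribed threshold $\varepsilon'>0$ I would first choose $\varepsilon<\varepsilon'/2$, so that whenever the weighted quantity $e_{y_{m}-n}g_{n}\|\mathcal{Y}_{n}(f)-f\|_{\infty}$ exceeds $\varepsilon'$ the inequality above forces at least one of the three weighted test-function norms $e_{y_{m}-n}g_{n}\|\mathcal{Y}_{n}(1)-1\|_{\infty}$, $e_{y_{m}-n}g_{n}\|\mathcal{Y}_{n}(z)-z\|_{\infty}$, $e_{y_{m}-n}g_{n}\|\mathcal{Y}_{n}(z^{2})-z^{2}\|_{\infty}$ to exceed a fixed positive multiple of $\varepsilon'$. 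Consequently the index set defining the density in \eqref{1} is contained in the union of the three index sets governed by \eqref{2}, \eqref{3} and \eqref{4}. Since each of those three sets has deferred N\"{o}rlund density zero, and a finite union of $DN$-density-zero sets again has $DN$-density zero, the containment yields \eqref{1}. The main obstacle is precisely this last bookkeeping: the weights $e_{y_{m}-n}g_{n}$ are not uniformly bounded, so care is needed to distribute them across the three terms and to fix the thresholds so that the set inclusion holds for every $m$; once the inclusion is secured, the conclusion follows from the finite-union property for $DN$-density already exploited in the earlier theorems.
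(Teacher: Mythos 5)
The paper does not actually prove this theorem: it is stated as a quotation from \cite{sri2} and serves only as a template for Theorems 5.2 and 5.3, so there is no in-paper argument to compare yours against. Your outline is the classical Korovkin route (trivial necessity; the estimate $|f(t)-f(z)|\le \varepsilon+\tfrac{2M}{\delta^{2}}(t-z)^{2}$ from uniform continuity; positivity and linearity of $\mathcal{Y}_{n}$; expansion of $(t-z)^{2}$ into the three test functions; a set inclusion followed by the finite-union property of $DN$-density-zero sets), and this is indeed the argument used in the cited source and its relatives, so the approach is the right one.

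The one place where your write-up is not yet a proof is exactly the step you flag and then defer. Under the paper's definition of $St_{DNP}$ the weight $w_{n}=e_{y_{m}-n}g_{n}$ sits inside the condition defining the exceptional set, so you must multiply your deterministic bound by $w_{n}$, obtaining $w_{n}\Vert\mathcal{Y}_{n}(f)-f\Vert_{\infty}\le \varepsilon w_{n}+K\sum(\cdots)$. The leading term $\varepsilon w_{n}$ does not correspond to any of the three hypothesis sets: the set $\{n\le\mathcal{R}_{m}:\ w_{n}\ge \varepsilon'/(4\varepsilon)\}$ need not have $DN$-density zero when the weights are unbounded (take, say, $e_{n}=n$ and $g_{n}=1$). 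Hence the inclusion of the exceptional set for \eqref{1} into the union of those for \eqref{2}--\eqref{4} does not follow from the hypotheses alone; one needs either a boundedness or regularity assumption on $e_{y_{m}-n}g_{n}$, or the variant definition in which the weights enter the counting measure rather than multiply the quantity being tested. Saying that ``care is needed'' does not supply that care, so as written the sufficiency direction is incomplete --- though the defect is arguably inherited from the definition adopted in the paper rather than introduced by your argument.
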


\begin{thm}
  Let $\mathcal{Y}_{n}: \mathcal{C}(Y)\rightarrow \mathcal{C}(Y)$. Then for all $f\in \mathcal{C}(Y),$ we have\\
  \begin{equation}\label{1}
    St_{DNM}\lim_{n\rightarrow \infty}||\mathcal{Y}_{n}(f,z)-f(z)||_{\infty}=0
  \end{equation}
   iff
 \begin{equation}\label{2}
    St_{DNM}\lim_{n\rightarrow \infty}||\mathcal{Y}_{n}(1,z)-1||_{\infty}=0,
  \end{equation}
  \begin{equation}\label{3}
    St_{DNM}\lim_{n\rightarrow \infty}||\mathcal{Y}_{n}(z,z)-z||_{\infty}=0,
  \end{equation}
  \begin{equation}\label{4}
    St_{DNM}\lim_{n\rightarrow \infty}||\mathcal{Y}_{n}(z^{2},z)-z^{2}||_{\infty}=0.\\
  \end{equation}\\
\end{thm}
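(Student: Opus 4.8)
The plan is to run the classical Korovkin argument with every ordinary limit replaced by the deferred N\"{o}rlund statistical mean limit $St_{DNM}$. The necessity is immediate: the test functions $1,z,z^{2}$ all belong to $\mathcal{C}(Y)$, so applying \eqref{1} to each of them in turn yields \eqref{2}, \eqref{3} and \eqref{4}. Hence the entire content lies in the sufficiency, and for this I would first isolate a single deterministic pointwise inequality and then push it through the density machinery.

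Fix $f\in\mathcal{C}(Y)$. Since $Y$ is compact, $f$ is bounded, say $|f|\leq M$, and uniformly continuous, so for each $\varepsilon>0$ there is $\delta>0$ with $|f(t)-f(z)|<\varepsilon$ whenever $|t-z|<\delta$. Splitting into the cases $|t-z|<\delta$ and $|t-z|\geq\delta$ (in the latter $(t-z)^{2}/\delta^{2}\geq 1$) gives the uniform estimate
$$|f(t)-f(z)|\leq\varepsilon+\frac{2M}{\delta^{2}}(t-z)^{2}\qquad(t,z\in Y).$$
Applying $\mathcal{Y}_{n}$ and using its linearity, positivity and the resulting monotonicity, together with the expansion $(t-z)^{2}=t^{2}-2zt+z^{2}$, this bound propagates to
$$\|\mathcal{Y}_{n}(f,z)-f(z)\|_{\infty}\leq\varepsilon+K\Big(\|\mathcal{Y}_{n}(1,z)-1\|_{\infty}+\|\mathcal{Y}_{n}(z,z)-z\|_{\infty}+\|\mathcal{Y}_{n}(z^{2},z)-z^{2}\|_{\infty}\Big),$$
with a finite constant $K=K(\varepsilon,M,\delta,\sup_{z\in Y}|z|)$ independent of $n$.

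I then transfer this to the $St_{DNM}$ setting. Taking expectations preserves the inequality by monotonicity of $E$, and for a threshold $\varepsilon'>\varepsilon$, whenever the weighted mean on the left is at least $\varepsilon'$ at least one of the three weighted test-function terms must be at least $(\varepsilon'-\varepsilon)/(3K)$. Consequently the set
$$\Big\{n:n\leq\mathcal{R}_{m}\ \mbox{and}\ e_{y_{m}-n}g_{n}E\big(\|\mathcal{Y}_{n}(f,z)-f(z)\|_{\infty}\geq\varepsilon'\big)\Big\}$$
is contained in the union of the three corresponding sets built from $1$, $z$ and $z^{2}$. By \eqref{2}, \eqref{3} and \eqref{4} each of these has deferred N\"{o}rlund density zero, and a finite union of $DN$-density-zero sets again has $DN$-density zero; dividing by $\mathcal{R}_{m}$ and letting $m\to\infty$ forces the density of the left-hand set to vanish, which is exactly \eqref{1}.

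The step I expect to be delicate is this last transfer: one must choose the thresholds so that the set inclusion is genuinely valid and then invoke finite subadditivity of the deferred N\"{o}rlund density to let the three density-zero sets dominate the target set. Keeping $K$ independent of $n$ and $z$ in the pointwise estimate is what makes the inclusion uniform in $m$; the Korovkin inequality itself is routine by comparison.
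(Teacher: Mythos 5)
The paper does not actually prove this theorem: Theorem 5.1 (the $St_{DNP}$ version) is quoted from \cite{sri2}, and Theorems 5.2 and 5.3 are then stated without any proof as the analogous results for the other two modes of convergence. So there is no in-paper argument to compare yours against; what you have written is the standard Korovkin machinery that the cited sources use for the probability version, transplanted to the mean setting, and in outline it is the right (and essentially the only) argument: necessity by specialising to the test functions $1$, $z$, $z^{2}$, sufficiency via the uniform estimate $|f(t)-f(z)|\le\varepsilon+\frac{2M}{\delta^{2}}(t-z)^{2}$, positivity and linearity of $\mathcal{Y}_{n}$, and a set inclusion followed by finite subadditivity of the deferred N\"{o}rlund density.

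The step you yourself flag as delicate is where the argument, read literally against this paper's definitions, has a genuine gap. The $St_{DNM}$ density sets carry the weight $e_{y_{m}-n}g_{n}$ inside the inequality, so weighting your Korovkin estimate gives $e_{y_{m}-n}g_{n}\|\mathcal{Y}_{n}(f,z)-f(z)\|_{\infty}\le \varepsilon\, e_{y_{m}-n}g_{n}+K\sum(\cdots)$, and the term $\varepsilon\, e_{y_{m}-n}g_{n}$ is not uniformly small unless the weights are bounded; in the paper's own Example 1 the weight is $2m$, which is unbounded, so the implication ``weighted left-hand side $\ge\varepsilon'$ forces one weighted test term $\ge(\varepsilon'-\varepsilon)/(3K)$'' does not follow as stated. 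This defect is inherited from the paper's non-standard definition rather than introduced by you, and it equally afflicts the quoted Theorem 5.1; it is repaired either by assuming the weights bounded or by forming the density sets with the unweighted quantity $\|\mathcal{Y}_{n}(f,z)-f(z)\|_{\infty}$ before weighting, as in the usual deferred N\"{o}rlund Korovkin literature. Apart from this, your necessity direction, the uniformity of $K$ in $n$ and $z$, and the fact that a finite union of $DN$-density-zero sets has $DN$-density zero are all correct, so the remainder of the transfer is sound.
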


\begin{thm}
  Let $\mathcal{Y}_{n}: \mathcal{C}(Y)\rightarrow \mathcal{C}(Y)$. Then for all $f\in \mathcal{C}(Y),$ we have\\
  \begin{equation}\label{1}
    St_{DNDC}\lim_{n\rightarrow \infty}||\mathcal{Y}_{n}(f,z)-f(z)||_{\infty}=0
  \end{equation}
   iff
 \begin{equation}\label{2}
    St_{DNDC}\lim_{n\rightarrow \infty}||\mathcal{Y}_{n}(1,z)-1||_{\infty}=0,
  \end{equation}
  \begin{equation}\label{3}
    St_{DNDC}\lim_{n\rightarrow \infty}||\mathcal{Y}_{n}(z,z)-z||_{\infty}=0,
  \end{equation}
  \begin{equation}\label{4}
    St_{DNDC}\lim_{n\rightarrow \infty}||\mathcal{Y}_{n}(z^{2},z)-z^{2}||_{\infty}=0.\\
  \end{equation}
\end{thm}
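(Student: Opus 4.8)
The plan is to prove the two implications separately, with essentially all the work concentrated in the converse direction. The forward implication, from \eqref{1} to \eqref{2}--\eqref{4}, is immediate: the constant function $1$, the identity function $z$ and the function $z^{2}$ are particular members of $\mathcal{C}(Y)$, so specializing the hypothesis \eqref{1} to each of them produces \eqref{2}, \eqref{3} and \eqref{4} directly. Hence the substance of the theorem is the reverse implication, which I would establish by adapting the classical Korovkin estimate to the deferred N\"{o}rlund statistical density.

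First I would fix $f\in\mathcal{C}(Y)$ and $\varepsilon>0$. Since $Y$ is compact, $f$ is bounded, say $|f(z)|\leq M$ for all $z\in Y$, and uniformly continuous, so there is $\delta>0$ with $|f(t)-f(z)|<\varepsilon$ whenever $|t-z|<\delta$. For the complementary range $|t-z|\geq\delta$ I would use the crude bound $|f(t)-f(z)|\leq 2M\leq \frac{2M}{\delta^{2}}(t-z)^{2}$. Combining the two cases gives the pointwise inequality
$$|f(t)-f(z)|\leq \varepsilon+\frac{2M}{\delta^{2}}(t-z)^{2}\qquad(t,z\in Y).$$

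Next I would apply the positive linear operator $\mathcal{Y}_{n}$ to this inequality. Using positivity, linearity and monotonicity, together with the expansion $(t-z)^{2}=t^{2}-2zt+z^{2}$, a routine rearrangement bounds $|\mathcal{Y}_{n}(f,z)-f(z)|$ by a constant multiple of $\varepsilon$ plus a linear combination of $|\mathcal{Y}_{n}(1,z)-1|$, $|\mathcal{Y}_{n}(z,z)-z|$ and $|\mathcal{Y}_{n}(z^{2},z)-z^{2}|$, with coefficients depending only on $M$, $\delta$ and $\sup_{z\in Y}|z|$. Passing to the supremum over $z\in Y$ then yields
$$||\mathcal{Y}_{n}(f,z)-f(z)||_{\infty}\leq \varepsilon+K\Big(||\mathcal{Y}_{n}(1,z)-1||_{\infty}+||\mathcal{Y}_{n}(z,z)-z||_{\infty}+||\mathcal{Y}_{n}(z^{2},z)-z^{2}||_{\infty}\Big)$$
for a finite constant $K=K(f,\delta,Y)$.

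The main obstacle, and the only genuinely non-routine step, is converting this deterministic norm inequality into a statement about the deferred N\"{o}rlund statistical density. Given $\varepsilon'>0$, I would run the argument above with $\varepsilon<\varepsilon'/2$ and observe that, after multiplying through by the nonnegative weight $e_{y_{m}-n}g_{n}$, whenever $e_{y_{m}-n}g_{n}||\mathcal{Y}_{n}(f,z)-f(z)||_{\infty}\geq\varepsilon'$ holds, at least one of the three weighted test-function quantities must be at least $\frac{\varepsilon'}{6K}$. Consequently the index set
$$\Big\{n:n\leq\mathcal{R}_{m}\;\text{and}\; e_{y_{m}-n}g_{n}||\mathcal{Y}_{n}(f,z)-f(z)||_{\infty}\geq\varepsilon'\Big\}$$
is contained in the union of the three analogous sets associated with $1$, $z$ and $z^{2}$ at level $\frac{\varepsilon'}{6K}$. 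Each of those three sets has deferred N\"{o}rlund density zero by \eqref{2}, \eqref{3} and \eqref{4}, and a union of finitely many density-zero sets again has density zero; dividing by $\mathcal{R}_{m}$ and letting $m\to\infty$ therefore forces the density of the left-hand set to vanish. This is precisely $St_{DNDC}\lim_{n\to\infty}||\mathcal{Y}_{n}(f,z)-f(z)||_{\infty}=0$, which completes the proof of the converse and hence of the equivalence.
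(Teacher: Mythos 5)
The paper does not actually supply a proof of this theorem: Theorems 5.1--5.3 are all stated without argument (Theorem 5.1 is imported from the reference [sri2], and 5.2, 5.3 are presented as its analogues for $St_{DNM}$ and $St_{DNDC}$), so there is no in-paper proof to compare against. Your proposal reconstructs the standard Korovkin-type argument --- trivial forward direction by specialization to the test functions $1$, $z$, $z^{2}$; converse via uniform continuity, the bound $|f(t)-f(z)|\leq \varepsilon+\tfrac{2M}{\delta^{2}}(t-z)^{2}$, positivity and linearity of $\mathcal{Y}_{n}$, and a three-set inclusion at the level of the density --- and this is exactly the route taken in the cited sources on which the theorem rests. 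In that sense your proof is the intended one and supplies what the paper omits.

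One step deserves more care than you give it, and it is a genuine issue with the paper's framework rather than with your outline. The set defining $St_{DNDC}$ carries the weight $e_{y_{m}-n}g_{n}$ \emph{inside} the inequality. Multiplying your norm estimate by this weight gives
\begin{equation*}
e_{y_{m}-n}g_{n}\norm{\mathcal{Y}_{n}(f,z)-f(z)}_{\infty}\leq \varepsilon\, e_{y_{m}-n}g_{n}+K\sum_{i}e_{y_{m}-n}g_{n}\norm{\mathcal{Y}_{n}(e_{i},z)-e_{i}(z)}_{\infty},
\end{equation*}
and your claim that the left side being $\geq\varepsilon'$ forces one of the three weighted test quantities to be $\geq\varepsilon'/(6K)$ requires the term $\varepsilon\, e_{y_{m}-n}g_{n}$ to be at most $\varepsilon'/2$. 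Since the weights in this paper are allowed to be unbounded (Example 1 uses $e_{y_{m}-m}=2m$), choosing $\varepsilon<\varepsilon'/2$ does not achieve this; you need either a boundedness hypothesis on $(e_{m})$, $(g_{m})$, or a reformulation in which the $\varepsilon$-term is not weighted. The cited literature passes over this point silently, so your proof is at the same level of rigor as the sources, but if you want a complete argument you should state the extra hypothesis or restructure the set inclusion accordingly.
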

\noindent\textbf{Example 3:}
  Let $\mathcal{M}_{m}(f,y)$ be a Meyer-K\"{o}nig and Zeller operators on $\mathcal{C}[0,1]$ and $Z=[0,1]$ as defined in \cite{altin} as\\
  $$\mathcal{M}_{m}(f,y)=(1-y)^{m+1}\sum_{t=0}^{\infty}f\Big(\frac{t}{t+m+1}\Big)\binom{m+t}{t}y^{t}.$$\\
  Further, let us consider a sequence of operators $\mathcal{Y}_{n}:\mathcal{C}[0,1]\rightarrow \mathcal{C}[0,1]$ and $(Y_{n})$ as defined in example $2.4$ such that\\
  \begin{equation}\label{66}
    \mathcal{Y}_{n}(f,y)=[1+F_{Y_{m}}(y)]\mathcal{M}_{n}(f),\;\;\;(f\in \mathcal{C}[0,1]),\\
  \end{equation}
where $(F_{Y_{m}}(y))$ is defined in Example 2. Now we observe that\\
  $$\mathcal{Y}_{n}(1,y)=[1+F_{Y_{m}}(y)]\cdot1=[1+F_{Y_{m}}(y)],$$  $$\mathcal{Y}_{n}(u,z)=[1+F_{Y_{m}}(y)]\cdot y=[1+F_{Y_{m}}(y)]\cdot y$$ and
  \begin{eqnarray*}
    \mathcal{Y}_{n}(u^{2},z)&=&[1+F_{Y_{m}}(y)]\cdot \Big\{y^{2}\Big(\frac{m+2}{m+1}\Big)+\frac{y}{m+1}\Big\}.\\
  \end{eqnarray*}
  Therefore, we have
  \begin{eqnarray*}
    St_{DNDC}\lim_{n\rightarrow \infty}||\mathcal{Y}_{n}(1,y)-1||_{\infty}=0,
  \end{eqnarray*}
   \begin{eqnarray*}
   St_{DNDC}\lim_{n\rightarrow \infty}||\mathcal{Y}_{n}(y,y)-y||_{\infty}=0,
  \end{eqnarray*}
   \begin{eqnarray*}
    St_{DNDC}\lim_{n\rightarrow \infty}||\mathcal{Y}_{n}(y^{2},y)-y^{2}||_{\infty}=0.\\
  \end{eqnarray*}
Hence, $\mathcal{Y}_{n}(f,y)$ fulfills \eqref{2}, \eqref{3} and \eqref{4}. Thus, from Theorem 5.3 $$St_{DNDC}\lim_{n\rightarrow \infty}||\mathcal{Y}_{n}(f,y)-f||_{\infty}=0.$$\\
Hence, it is $(DNDC)-$convergent. However, $(Y_{m})$ is neither $(DN)-$statistical convergent nor $(DN)-$convergent. Thus, we can exhibit that the work in \cite{sri1} does not hold for our operators described in \eqref{66}. Hence, our Theorem 5.3 is stronger than the theorem proved in \cite{sri1}.\\
\section{Conclusion}
\noindent Upon prior analysis, our interest is to modify the studies of Srivastava et al. \cite{sri4} and introduce various aspects of statistical convergence for the sequences of random variables and sequences of real numbers via deferred Norlund summability mean. We first study various results presenting the connection by using fundamental limit concepts of sequences of random variables. As an applications of our findings, we present new Krorvkin-type approximation results and also demonstrated the effectiveness of the findings. As a future work one can obtain the corresponding results of the present paper using deferred Euler summability mean.

\noindent \textbf{Compliance with ethical standards}\\\\
\noindent \textbf{Availability of data and material:}
Not applicable.\\\\
\noindent \textbf{Conflict of interest:}
The authors declare that they have no conflict of interest.\\\\
\noindent \textbf{Ethical approval:}
 This article does not contain any studies with human participants or animals performed by any of the authors.\\\\
 \noindent \textbf{Acknowledgement:}  The corresponding author thanks the Council of Scientific and Industrial Research (CSIR), India for partial support under
Grant No. 25(0288)/18/EMR-II, dated 24/05/2018.\\\\


\begin{thebibliography}{}
\bibitem {altin} A. Altin, O. Do\v{g}ru and F. Ta\c{s}delen \textit{The generalization of Meyer-König and Zeller operators by generating functions}, J. Math. Anal. Appl., \textbf{312} (2005), 181–194.
\bibitem {dutta} H. Dutta, S. K. Paikray and B. B. Jena \textit{On statistical deferred Cesàro summability}, Current Trends in Mathematical Analysis and Its Interdisciplinary Applications. Birkhäuser, Cham, \textbf{487(1)} (2019), 885-909.
\bibitem {esi} A. Esi and E. Savas, \textit{On lacunary statistically convergent triple sequences in probabilistic normed space,} Appl. Math. Inf. Sci. , \textbf{9(5)} (2015), 2529-2534.
\bibitem {et} M. Et, P. Baliarsingh and H. Sengul, \textit{Deferred statistical convergence and strongly deferred summable functions,} AIP Conf. Proc., \textbf{2183(1)} (2019).
\bibitem {fast} H. Fast, \textit{Sur la convergence statistique,} Colloquium Mathematicae, \textbf{2}(1951), 241-244.
\bibitem {bipan} B. Hazarika, N. Subramanian, and M. Mursaleen \textit{Korovkin-type approximation theorem for Bernstein operator of rough statistical convergence of triple sequences}, Adv. Oper. Theory, \textbf{5(2)} (2020): 324-335.
\bibitem {schoen} I. J. Schoenberg, \textit{The integrability of certain functions and related summability methods,} The American Mathematical Monthly, \textbf{66(5)} (1959), 361-775.
\bibitem {rath} D. Rath and B. C. Tripathy, \textit{Matrix maps on sequence spaces associated with sets of integers,} Indian J. Pure Appl. Math., \textbf{27(2)} (1996), 197-206.
\bibitem {jena} B. B. Jena, S. K. Paikray and H. Dutta \textit{On Various New Concepts of Statistical Convergence for Sequences of Random Variables via Deferred Cesàro Mean}, J. Math. Anal. Appl., \textbf{487(1)} (2020), 123950.

\bibitem {raj} S. Jasrotia, U. P. Singh and K. Raj, \textit{Applications of Statistical Convergence of order $(\eta, \delta+\gamma)$ in difference sequence
spaces of fuzzy numbers}, J. Intell. Fuzzy Systems, \textbf{40(3)}(2021),  4695-4703.
\bibitem {moh} S. A. Mohiuddine and B. A. S. Alamri \textit{ Generalization of equi-statistical convergence via weighted lacunary sequence with associated Korovkin and Voronovskaya type approximation theorems}, Rev. R. Acad. Cienc. Exactas Fís. Nat. Ser. A Mat. RACSAM, \textbf{113} (2019): 1955–1973.
\bibitem {moh1} S. A. Mohiuddine, A. Alotaibi and M. Mursaleen, \textit{Statistical convergence of double sequences in locally solid Riesz spaces}, Abstr. Appl. Anal., (2012), \textbf{2012}.
\bibitem {moh2} S. A. Mohiuddine, A. Alotaibi, and M. Mursaleen \textit{Statistical summability $(C, 1)$ and a Korovkin type approximation theorem}, J. Inequal. Appl., \textbf{2012(1)} (2012): 1-8.
\bibitem {mur} M. Mursaleen, V. Karakaya, M. Ertürk and F Gürsoy \textit{Weighted statistical convergence and its application to Korovkin type approximation theorem}, Adv. Oper. Theory, \textbf{218(18)} (2012): 9132-9137.
\bibitem{mursaleen} M. Mursaleen, \textit{Applied Summability Methods},
Springer Briefs. Springer, New York, 2014.

\bibitem{jmcs1} Jemima, D. Eunice, Srinivasan, V. \textit{Norlund statistical convergence and Tauberian conditions for statistical convergence from statistical summability using Norlund means in non-Archimedean fields.} Journal of Mathematics and Computer Science, 24, no. 4 (2022), 299--307.
\bibitem{jmcs2} Khan, V. A., Fatima, H., Khan, M. D., Ahamd, A. \textit{Spaces of neutrosophic $\lambda$-statistical convergence sequences and their properties.} Journal of Mathematics and Computer Science, 23, no. 1 (2021), 1--9.
\bibitem {raj1} K. Raj and A. Choudhary, \textit{Relative modular uniform approximation by means of the power series method with applications}, Rev. Un. Mat. Argentina, \textbf{60} (2019), 187-208.
\bibitem {raj2} K. Raj and S. Pandoh \textit{Some vector-valued statistical convergent sequence spaces}, Malaya J. Mat., \textbf{3} (2015): 161–167.
\bibitem {gadjiev and Orhan}  A. D. Gadjiev and C. Orhan, \textit{Some approximation theorems via statistical convergence,} Rocky Mountain Journal of Mathematics, \textbf{32(1)}(2002), 129-138.
\bibitem {fridy} J. A. Fridy, \textit{On statistical convergence,} Analysis, \textbf{5(4)} (1985), 301-313.
\bibitem {Tripathy}  B.C. Tripathy, A. Esi and T. Balakrushna, \textit{On a new type of generalized difference Cesàro sequence spaces}, Soochow J. Math. \textbf{31}, (2005) 333-340.
\bibitem {san}  S. Ghosal, \textit{Statistical convergence for a sequence of random variables and limit theorems}, Appl. Math. (Prague). \textbf{58}, (2013) 423-437.
\bibitem {sri1} H. M. Srivastava, B. B. Jena, S. K. Paikray and U. K. Misra \textit{Generalized equi-statistical convergence of the
deferred Nörlund summability and its applications to associated approximation theorems}, Rev. R. Acad. Cienc. Exactas Fís. Nat. Ser. A Mat. RACSAM, \textbf{112} (2018), 1487--1501.
\bibitem {sri4} Srivastava HM, Jena BB, Paikray SK. Statistical probability convergence via the deferred Nörlund mean and its applications to approximation theorems, \textit{Rev. R. Acad. Cienc. Exactas Fís. Nat. Ser. A Mat. RACSAM,} 2020;114:1-14.
\bibitem {sri2} H. M. Srivastava, B. B. Jena, S. K. Paikray \textit{Deferred Ces\`{a}ro statistical probability convergence and its applications to approximation theorems}, J. Nonlinear Convex Anal., \textbf{20} (2019), 1777--1792.
 \bibitem {sri} H. M. Srivastava, B. B. Jena and S. K. Paikray, \textit{A certain class of statistical probability convergence and its
applications to approximation theorems}, Appl. Anal. Discrete Math., \textbf{14(3)} (2020), 579-598.
\end{thebibliography}
\end{document}